\newtheorem{theorem}{Theorem}[section]
\theoremstyle{definition}
\newtheorem{remark}[theorem]{Remark}
\begin{document}

\title{Proofs Of Three Geode Conjectures }

%\title{my question}

%    Information for the author
\author[T. Amdeberhan]{Tewodros Amdeberhan}
\address{Department of Mathematics,
Tulane University, New Orleans, LA 70118, USA}
\email{tamdeber@tulane.edu}
\author[D. Zeilberger]{Doron Zeilberger}
\address{Rutgers University, Department of Mathematics, 110 Frelinghuysen Rd, Piscataway, NJ 08854, USA}
\email{DoronZeil@gmail.com}

\begin{abstract} In the May 2025 issue of the Amer. Math. Monthly, 
Norman J. Wildberger and Dean Rubine introduced a new kind of multi-indexed numbers, that they call `Geode numbers', obtained from the Hyper-Catalan numbers. They posed
three intriguing conjectures about them, that are proved in this note.

\end{abstract}

%    General info
\subjclass[2000]{Primary, Secondary}

\date{\today}

\maketitle

\newcommand{\nn}{\nonumber}
\newcommand{\ba}{\begin{eqnarray}}
\newcommand{\ea}{\end{eqnarray}}
\newcommand{\realpart}{\mathop{\rm Re}\nolimits}
\newcommand{\imagpart}{\mathop{\rm Im}\nolimits}
\newcommand{\lcm}{\operatorname*{lcm}}
\newcommand{\vGam}{\varGamma}% Slanted Gamma

\newtheorem{Definition}{\bf Definition}[section]
\newtheorem{Thm}[Definition]{\bf Theorem}
\newtheorem{Theorem}[Definition]{\bf Theorem}
\newtheorem{Example}[Definition]{\bf Example}
\newtheorem{Lem}[Definition]{\bf Lemma}
\newtheorem{Rm}[Definition]{\bf Remark}
\newtheorem{Note}[Definition]{\bf Note}
\newtheorem{Cor}[Definition]{\bf Corollary}
\newtheorem{Prop}[Definition]{\bf Proposition}
\newtheorem{Conj}[Definition]{\bf Conjecture}
\newtheorem{Problem}[Definition]{\bf Problem}
\numberwithin{equation}{section}

\section{Introduction}

\noindent
In a recent captivating Monthly article \cite{WR},
by Norman J. Wildberger and Dean Rubine, the authors utilize a generating series to solve the general univariate polynomial equation. 
They also explored a ``curious factorization" of this hyper-Catalan generating series,
and in the penultimate section, they made three conjectures about this algebraic object that they termed the \emph{Geode array}. 

\smallskip
\noindent
In this note, we prove these three conjectures. At least as interesting as the actual statements of the conjectures (now theorems)
is {\it how we proved them}, using several important {\it tools of the trade}.

The first tool is the \emph{multinomial theorem}
\begin{align} \label{multi}
(x_1+\cdots+x_r)^n & = \sum_{\substack{m_1,\dots.m_r\geq0 \\ m_1+\cdots+m_r=n}} \binom{n}{m_1,\dots,m_r} x_1^{m_1}\cdots x_r^{m_r}.
\end{align}

\noindent
The second tool is \emph{constant-term extraction},  the third is 
\emph{Wilf-Zeilberger (WZ) algorithmic proof theory}\cite{WZ} and the last-but-not-least tool is
\emph{Lagrange Inversion} \cite{Z}  that states that: \emph{if $u(t)$ and $\Phi(t)$ are formal power series starting at $t^1$ and $t^0$,
respectively, then $u(t) = t\Phi(u(t))$ implies
\begin{align} \label{lag} 
[t^n] u(t) & =\frac1n [z^{n-1}] \Phi(z)^n.
\end{align} 
}
Here $[z^n] F(z)$ means the coefficient of $z^n$ in the Laurent expansion of $F(z)$.
We shall use the notation $\pmb{CT}_z F(z)$ for the constant-term of $F(z)$.

\smallskip
\noindent
We now bring in the relevant notation adopted in \cite{WR} with a caveat that indices are shifted slightly. Consider the equation $0=1-\alpha+\sum_{k\geq1} t_k\alpha^{k+1}$ and denote its series solution by $\alpha=\pmb{S}[t_1, t_2, \dots]$. Letting $\pmb{S}_1=t_1+t_2+\cdots$, Wildberger-Rubine proved \cite[Theorem 12]{WR} the existence of a (remarkable!) factorization $\pmb{S}-1=\pmb{S}_1\pmb{G}$ and the factor $\pmb{G}[t_1,t_2,\dots]$ 
(that they dubbed the \emph{Geode series}). 
Furthermore, we opt to use $G[m_1,m_2,\dots]$ for the coefficient of $t_1^{m_1}t_2^{m_2}\cdots$ in the polyseries $\pmb{G}[t_1,t_2,\dots]$. 
We are now ready to state and prove the three  conjectures from \cite[p. 399]{WR}. For the sake of clarity, let's describe the first of these  in some detail.

\smallskip
\noindent
Suppose we are solving the polynomial equation $0=1-\alpha+t_1\alpha^2+t_2\alpha^3$ through the formal power series 
$$\alpha=\pmb{S}[t_1,t_2]=\sum_{m_1,m_2\geq0} C[m_1,m_2]\,t_1^{m_1}t_2^{m_2}.$$
Consequently, the corresponding Geode series becomes $\pmb{G}[t_1,t_2]=\frac{\pmb{S}[t_1,t_2]-1}{t_1+t_2}$.
We follow closely \cite{Z} 
to engage the Lagrange Inversion in the extraction of the coefficients $C[m_1,m_2]$ satisfying $n=m_1+m_2$. Then, the amalgamation of such monomials is given by \eqref{lag} in the form of
\begin{align*}
\sum_{m_1+m_2=n} C[m_1,m_2]\, t_1^{m_1}t_2^{m_2}
& =[Y^n]\left(\sum_{k=1}^{3n+1}\frac1k\, [z^{k-1}]\left(1+Yt_1z^2+Yt_2z^3\right)^k\right)  \\
& =[Y^n] \sum_{m_1,m_2\geq0} \frac{\binom{1+2m_1+3m_2}{m_1,m_2,1+m_1+2m_2} }{1+2m_1+3m_2} Y^{m_1+m_2}t_1^{m_1}t_2^{m_2}  \\
& = \sum_{\substack{m_1,m_2\geq0 \\ m_1+m_2=n}} \frac{\binom{1+2m_1+3m_2}{m_1,m_2,1+m_1+2m_2} }{1+2m_1+3m_2} t_1^{m_1}t_2^{m_2}   \\
& = \sum_{m_2=0}^n \frac{\binom{1+2n+m_2}{n-m_2,m_2,1+n+m_2} }{1+2n+m_2} t_1^{n-m_2}t_2^{m_2}   \\
& = \sum_{k=0}^n \frac{\binom{n}k\binom{2n+1+k}{n+1+k}}{2n+1+k} t_1^{n-k}t_2^k.
\end{align*}
For example, the following reveal both coefficients $C[m_1,m_2]$ and $G[m_1,m_2]$:
\begin{align*}
\sum_{m_1+m_2=3} C[m_1,m_2]\, t_1^{m_1}t_2^{m_2}
&=(t_1+t_2)(5t_1^2 + 16t_1t_2 + 12t_2^2),  \\
\sum_{m_1+m_2=4} C[m_1,m_2]\, t_1^{m_1}t_2^{m_2}
& = (t_1+t_2)(14t_1^3 + 70t_1^2t_2 + 110t_1t_2^2 + 55t_2^3).
\end{align*}
As a first step, we reprove that the linear term $t_2+t_3$ divides the polynomial
$$P_n(t_1,t_2):= \sum_{k=0}^n \frac{\binom{n}k\binom{2n+1+k}{n+1+k}}{2n+1+k} t_1^{n-k}t_2^k.$$
This is equivalent to proving that $P_n(-t_2,t_2)=0$, which,  in turn, is equivalent to the following identity:
$$ \sum_{k=0}^n (-1)^k\frac{\binom{n}k\binom{2n+1+k}{n+1+k}}{2n+1+k} =0.$$
To continue, we invoke the role of the WZ method. 
Define the functions $F(n,k):= (-1)^k\frac{\binom{n}k\binom{2n+1+k}{n+1+k}}{2n+1+k}$ and also $H(n,k):=-F(n,k)\cdot \frac{k(n+1+k)}{n(2n+1)}$ to verify $F(n,k)=H(n,k+1)-H(n,k)$. The rest is routine \cite{WZ}.

\smallskip
\noindent
Our next step will actually find $G[m_1,m_2]$. For that we perform the division $\frac{P_n(t_1,t_2)}{t_1+t_2}$
to obtain (algebraically) that
\begin{align*}
[t_1^{n-1-i}t_2^i]\left(\frac{P_n(t_1,t_2)}{t_1+t_2}\right)
& =\sum_{j=0}^i (-1)^{i-j}\frac{\binom{n}j\binom{2n+1+j}{n+1+j}}{2n+1+j}  \\
& = (-1)^i[H(n,i+1)-H(n,0)]  \\
& = (-1)^i H(n,i+1) \\
& = \frac1{2n+1}\binom{n-1}i \binom{2n+1+i}{n+1+i}
\end{align*}
which leads to (an equivalent form of) the first conjecture \cite{WR} on $G[m_1,m_2]$. To wit:

\begin{theorem} \label{conj1} For non-negative integers $m_1$ and $m_2$, we have
$$G[m_1,m_2]=\frac1{(2m_1+2m_2+3)(m_1+m_2+1)} \frac{(2m_1+3m_2+3)!}{(m_1+2m_2+2)!m_1!m_2!}.$$
\end{theorem}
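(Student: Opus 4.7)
The plan is to use Lagrange inversion to extract the homogeneous degree-$n$ slice of $\pmb{S}(t_1,t_2)-1$ as an explicit polynomial $P_n(t_1,t_2)$, verify divisibility by $t_1+t_2$ via the Wilf--Zeilberger method, and then repurpose the same WZ certificate to read off the coefficients of the quotient $P_n/(t_1+t_2)$, which (up to reindexing by $n=m_1+m_2+1$ and $i=m_2$) are exactly the Geode coefficients $G[m_1,m_2]$.

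Concretely, after applying \eqref{lag} to $\alpha = 1 + t_1\alpha^2 + t_2\alpha^3$ and expanding via the multinomial theorem \eqref{multi}, one obtains
$$P_n(t_1,t_2)=\sum_{k=0}^n\frac{\binom{n}{k}\binom{2n+1+k}{n+1+k}}{2n+1+k}\,t_1^{n-k}t_2^k.$$
To establish $(t_1+t_2)\mid P_n$, it suffices to check $P_n(-t_2,t_2)=0$; setting $F(n,k):=(-1)^k\binom{n}{k}\binom{2n+1+k}{n+1+k}/(2n+1+k)$ and the companion $H(n,k):=-F(n,k)\cdot k(n+1+k)/[n(2n+1)]$, a direct verification yields $F(n,k)=H(n,k+1)-H(n,k)$, so the sum telescopes to $H(n,n+1)-H(n,0)=0$.

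The decisive observation is that the \emph{same} $H$ computes the quotient coefficients. Polynomial (synthetic) division gives
$$[t_1^{n-1-i}t_2^i]\!\left(\frac{P_n}{t_1+t_2}\right)=\sum_{j=0}^i(-1)^{i-j}\frac{\binom{n}{j}\binom{2n+1+j}{n+1+j}}{2n+1+j}=(-1)^i H(n,i+1)=\frac{\binom{n-1}{i}\binom{2n+1+i}{n+1+i}}{2n+1}.$$
Finally, substituting $n=m_1+m_2+1$ and $i=m_2$ and repackaging factorials via $\binom{m_1+m_2}{m_2}=(m_1+m_2)!/(m_1!\,m_2!)$ together with $(m_1+m_2)!/(m_1+m_2+1)!=1/(m_1+m_2+1)$ rearranges the product into the claimed form. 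The main obstacle is locating the WZ companion $H$; once it is in hand, it simultaneously certifies vanishing at $t_1=-t_2$ and furnishes a closed form for every coefficient of $P_n/(t_1+t_2)$, so the remainder of the argument amounts to routine factorial bookkeeping.
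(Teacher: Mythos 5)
Your proposal is correct and follows the paper's own argument essentially verbatim: the same Lagrange-inversion formula for $P_n$, the same WZ pair $F(n,k)$, $H(n,k)=-F(n,k)\,k(n+1+k)/[n(2n+1)]$ used both to certify $P_n(-t_2,t_2)=0$ and to telescope the quotient coefficients to $(-1)^iH(n,i+1)=\frac{1}{2n+1}\binom{n-1}{i}\binom{2n+1+i}{n+1+i}$, and the same reindexing $n=m_1+m_2+1$, $i=m_2$. No substantive differences to report.
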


\section{On the second conjecture}

\noindent
Now that the reader, hopefully, is getting accustomed to our proof-procedure as depicted in Section 1, let's move on to next conjecture \cite[p. 399]{WR} which does generalize the one we just finished proving.
For brevity, denote $\widetilde{G}=\widetilde{G}[m_a,m_{a+1}]=G[0,0,\dots,m_a,m_{a+1}]$. Again, we revive the Lagrange Inversion \eqref{lag}. Suppose $n=m_a+m_{a+1}$. Then the total content of such monomials is encapsulated by
\begin{align*}
\sum_{m_a+m_{a+1}=n} \widetilde{G}\, t_a^{m_2}t_{a+1}^{m_3}
& =\frac{[Y^n]}{t_a+t_{a+1}} \sum_{k=1}^{(a+1)n+1}\frac1k\, [z^{k-1}]\left(1+Yt_az^a+Yt_{a+1}z^{a+1}\right)^k  \\
& = \frac{[Y^n]}{t_a+t_{a+1}} \sum_{m_a,m_{a+1}\geq0} \frac{\binom{1+am_a+(a+1)m_{a+1}}{m_a,m_{a+1},1+(a-1)m_a+am_{a+1}} Y^{m_a+m_{a+1}}t_a^{m_a}t_{a+1}^{m_{a+1}} }{1+am_a+(a+1)m_{a+1}}  \\
& = \sum_{\substack{m_a,m_{a+1}\geq0 \\ m_a+m_{a+1}=n}} \frac{\binom{1+am_a+(a+1)m_{a+1}}{m_a,m_{a+1},1+(a-1)m_a+am_{a+1}}  }{1+am_a+(a+1)m_{a+1}}
\frac{t_a^{m_a}t_{a+1}^{m_{a+1}} }{t_a+t_{a+1}}  \\
& = \sum_{m_{a+1}=0}^n \frac{\binom{1+an+m_{a+1}}{n-m_{a+1},m_{a+1},1+(a-1)n+m_3} }{1+an+m_{a+1}} 
\frac{t_a^{n-m_{a+1}} t_{a+1}^{m_{a+1}} }{t_a+t_{a+1}}  \\
& = \sum_{k=0}^n \frac{\binom{n}k\binom{an+1+k}{(a-1)n+1+k}}{an+1+k} \frac{t_a^{n-k}t_{a+1}^k}{t_a+t_{a+1}}.
\end{align*}

\noindent
As a first step, we justify that the linear term $t_a+t_{a+1}$ divides the polynomial
$$P_n(t_a,t_{a+1}):= \sum_{k=0}^n \frac{\binom{n}k\binom{an+1+k}{(a-1)n+1+k}}{an+1+k}  t_a^{n-k}t_{a+1}^k.$$
This is tantamount to $P_n(-t_{a+1},t_{a+1})=0$ which is equivalent to the identity that
$$ \sum_{k=0}^n (-1)^k \frac{\binom{n}k\binom{an+1+k}{(a-1)n+1+k}}{an+1+k} =0.$$
Again, apply the Wilf-Zeilberger approach with $F(n,k):= \frac{(-1)^k \binom{n}k\binom{an+1+k}{(a-1)n+1+k}}{an+1+k}$ and $H(n,k):=-F(n,k)\cdot \frac{k((a-1)n+1+k)}{n(an+1)}$ to verify $F(n,k)=H(n,k+1)-H(n,k)$. The rest is trivial.

\smallskip
\noindent
Our next step will actually determine $\widetilde{G}[m_a,m_{a+1}]$. To this effect, let's divide $\frac{P_n(t_a,t_{a+1})}{t_a+t_{a+1}}$
to obtain (routinely) that
\begin{align*}
[t_a^{n-1-i}t_{a+1}^i]\left(\frac{P_n(t_a,t_{a+1})}{t_a+t_{a+1}}\right)
& =\sum_{j=0}^i (-1)^{i-j}\frac{\binom{n}j\binom{an+1+j}{(a-1)n+1+j}}{an+1+j} \\
& = (-1)^i[H(n,i+1)-H(n,0)]  = (-1)^i H(n,i+1) \\
& = \frac1{an+1}\binom{n-1}i \binom{an+1+i}{(a-1)n+1+i}
\end{align*}
which proves the desired conjecture on $\widetilde{G}[m_a,m_{a+1}]$. To wit:

\begin{theorem} \label{conj2} Denote $m=m_a+m_{a+1}$. For integers $m_a, m_{a+1}\geq0$ there holds
$$\widetilde{G}[m_a,m_{a+1}]=\frac{(am_a+(a+1)(m_{a+1}+1))!}
{(a(m+1)+1)(m+1)((a-1)m_a+a(m_{a+1}+1))! m_a!m_{a+1}!}.$$
\end{theorem}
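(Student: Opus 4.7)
The derivation preceding the theorem statement has, via Lagrange inversion (\ref{lag}), the multinomial theorem (\ref{multi}), a WZ divisibility check, and a telescoping coefficient extraction, essentially produced the closed form
\[
\widetilde{G}[n-1-i,\,i] \;=\; \frac{1}{an+1}\binom{n-1}{i}\binom{an+1+i}{(a-1)n+1+i}.
\]
My plan is therefore to reparameterize and rewrite this binomial expression in the factorial form claimed in Theorem \ref{conj2}. There is no new ingredient required: the entire remaining task is an arithmetic matching between two shapes of the same formula.

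I would set $m := m_a + m_{a+1}$ and take $n = m+1$ together with $i = m_{a+1}$, so that $n-1-i = m_a$. A direct computation then gives
\begin{align*}
an+1+i &\;=\; a(m+1)+1+m_{a+1} \;=\; am_a + (a+1)(m_{a+1}+1), \\
(a-1)n+1+i &\;=\; (a-1)(m+1)+1+m_{a+1} \;=\; (a-1)m_a + a(m_{a+1}+1),
\end{align*}
whose difference is $n = m+1$. Consequently,
\[
\binom{an+1+i}{(a-1)n+1+i} \;=\; \frac{\bigl(am_a+(a+1)(m_{a+1}+1)\bigr)!}{\bigl((a-1)m_a+a(m_{a+1}+1)\bigr)!\,(m+1)!}.
\]
Multiplying by $\binom{n-1}{i} = m!/(m_a!\,m_{a+1}!)$ and the prefactor $1/(an+1) = 1/(a(m+1)+1)$, and telescoping $m!/(m+1)! = 1/(m+1)$, delivers exactly the expression claimed.

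The only conceptual point requiring care is the index shift: $P_n$ has total $t$-degree $n$, but after division by $t_a+t_{a+1}$ the Geode quotient has degree $n-1$, so the correct substitution is $n = m+1$ rather than $n = m$. That is the sole obstacle; once it is accounted for, the verification is purely algebraic, because the tools of the trade (multinomial expansion, Lagrange inversion, and the explicit WZ certificate $H(n,k) = -F(n,k)\cdot k((a-1)n+1+k)/(n(an+1))$) have already done the heavy lifting in the passage preceding the theorem.
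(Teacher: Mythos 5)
Your proposal is correct and takes the same route as the paper: the paper's proof of Theorem~\ref{conj2} is precisely the Lagrange-inversion/WZ/telescoping derivation in Section~2 culminating in $\frac{1}{an+1}\binom{n-1}{i}\binom{an+1+i}{(a-1)n+1+i}$, with the final reparameterization left implicit, and your substitution $n=m+1$, $i=m_{a+1}$, $n-1-i=m_a$ (noting $an+1+i=am_a+(a+1)(m_{a+1}+1)$ and $(a-1)n+1+i=(a-1)m_a+a(m_{a+1}+1)$, with difference $m+1$) checks out exactly. You correctly identify the one subtlety, the degree shift $n=m+1$ coming from the division by $t_a+t_{a+1}$, so nothing is missing.
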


\section{On the third conjecture}

\noindent
The proof of the last conjecture \cite[p. 399]{WR} is a bit more complicated.

\begin{theorem} \label{conj3} For the $2a$-variate case, we have 
$$\pmb{G}[-f,f,\dots,-f,f]=\sum_n a^nf^n.$$
\end{theorem}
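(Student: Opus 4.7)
The plan is to exploit $\pmb{S}-1 = \pmb{S}_1\pmb{G}$ via a one-parameter perturbation and implicit differentiation of the defining polynomial. The first observation I would make is that under the alternating substitution $t_{2j-1} \mapsto -f$, $t_{2j}\mapsto f$ for $1 \leq j \leq a$, the series $\pmb{S}_1 = t_1 + \cdots + t_{2a}$ collapses to $0$, and the defining polynomial $1 - \alpha + \sum_{k=1}^{2a} t_k\alpha^{k+1}$ factors as $(1-\alpha)\bigl(1 - f(\alpha^2 + \alpha^4 + \cdots + \alpha^{2a})\bigr)$; accordingly the specialized $\pmb{S}$ is identically $1$. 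Both sides of $\pmb{S}-1 = \pmb{S}_1\pmb{G}$ therefore vanish at this point, so $\pmb{G}[-f,f,\ldots,-f,f]$ cannot be extracted by direct division.

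To resolve this $0/0$, I would introduce an auxiliary parameter $s$ by setting $t_{2j-1} := -f+s$ and $t_{2j} := f$, and let $\alpha(s;f) \in \Q[[s,f]]$ denote $\pmb{S}$ evaluated at this perturbed point. Under this substitution $\pmb{S}_1$ becomes $as$, so the factorization yields the identity $\alpha(s;f) - 1 = as\cdot \pmb{G}(-f+s,\, f,\,\ldots,\, -f+s,\, f)$ in $\Q[[s,f]]$. Since $\alpha(0;f) = 1$, the left side is divisible by $s$, and dividing by $s$ before setting $s=0$ produces
$$\pmb{G}[-f,f,\ldots,-f,f] \;=\; \tfrac{1}{a}\,\partial_s\,\alpha(s;f)\bigr|_{s=0}.$$

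I would then compute $\partial_s\alpha|_{s=0}$ by implicit differentiation of $1 - \alpha + \sum_k t_k(s)\alpha^{k+1} = 0$ in $s$, obtaining $\partial_s \alpha = (\alpha^2 + \alpha^4 + \cdots + \alpha^{2a}) / (1 - \sum_k t_k(k+1)\alpha^k)$. At $s=0$ one has $\alpha=1$, the numerator becomes $a$, and the denominator telescopes via $\sum_{j=1}^a[(-f)(2j) + f(2j+1)] = af$ to give $1 - af$. Hence $\partial_s \alpha|_{s=0} = a/(1-af)$, and therefore $\pmb{G}[-f,f,\ldots,-f,f] = 1/(1-af) = \sum_{n\geq 0} a^n f^n$, as desired.

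The principal obstacle to overcome is recognizing that one must perturb off the evaluation point in order to access $\pmb{G}$ there, since both factors of the Wildberger--Rubine factorization vanish under the direct substitution. Once one notices that the single-parameter perturbation $t_{2j-1} \mapsto -f+s$ makes $\pmb{S}_1 = as$, a quantity of the correct order to cancel the vanishing of $\pmb{S}-1$, the remainder is a routine implicit-differentiation calculation in which the clean telescoping $\sum_{j=1}^a[(2j+1) - 2j] = a$ produces the denominator $1-af$.
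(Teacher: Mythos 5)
Your proof is correct, and it takes a genuinely different --- and considerably shorter --- route than the paper's. The paper specializes the Lagrange-inversion expansion of the hyper-Catalan coefficients, reduces the resulting alternating multisum to the two binomial identities \eqref{conj3claim*} and \eqref{conj3claim**} (generalized with a free parameter $x$ in Claims 1 and 2), and evaluates those by constant-term extraction and the multinomial theorem. You instead take the Wildberger--Rubine factorization $\pmb{S}-1=\pmb{S}_1\pmb{G}$ as the primary object: since the alternating substitution kills both $\pmb{S}_1$ and $\pmb{S}-1$ (the specialized equation factors as $(1-\alpha)\bigl(1-f(\alpha^2+\cdots+\alpha^{2a})\bigr)$, forcing the specialized solution to be identically $1$), you recover $\pmb{G}$ at that point as the first-order term of a one-parameter perturbation, and implicit differentiation gives $\partial_s\alpha|_{s=0}=a/(1-af)$, hence $\pmb{G}[-f,f,\dots,-f,f]=1/(1-af)$. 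The formal-power-series bookkeeping all checks out: the substitutions send each $t_k$ to a series with no constant term, $\Q[[s,f]]$ is an integral domain so cancelling $s$ is legitimate, and the denominator $1-\sum_k(k+1)t_k\alpha^k$ is a unit since it equals $1$ at the origin. What the paper's route buys is a self-contained computation in the style of the rest of the note, together with the intermediate Claims 1 and 2, which are binomial identities of independent interest. What your route buys is brevity and conceptual clarity --- it isolates exactly why the answer is a geometric series, namely that at any evaluation point where $\pmb{S}_1$ vanishes one gets $\pmb{G}=\bigl(1-\sum_k(k+1)t_k\bigr)^{-1}$ --- and it extends with no extra work to the weighted evaluation recorded in the paper's final remark.
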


\begin{proof} To begin, we make a slight alteration by writing $(-1)^it_i$ instead of the customary plain $t_i$ \cite{WR}. Thanks to the Lagrange Inversion \eqref{lag}, we have
\begin{align*}
&[Y^n]\left(\sum_{k=1}^{\infty}\frac1k\, [z^{k-1}]\left(1-Yt_1z^2+Yt_2z^3-\dots-Yt_{2a-1}z^{2a}+Yt_{2a}z^{2a+1}\right)^k\right)  \\
= & [Y^n] \sum_{m_1,\dots,m_{2a}\geq0} 
\frac{ (-1)^{m_1+\cdots+m_{2a-1}}\binom{1+2m_1+3m_2+\cdots+(2a+1)m_{2a}}{m_1,m_2,\dots,m_{2a},1+m_1+2m_2+\cdots+(2a)m_{2a}}\,
(Yt_1)^{m_1}\cdots(Yt_{2a})^{m_{2a}}}
{1+2m_1+3m_2+\cdots+(2a+1)m_{2a}}   \\
= & \sum_{\substack{m_1,\dots,m_{2a}\geq0 \\ m_1+\cdots+m_{2a}=n}} 
\frac{ (-1)^{m_1+\cdots+m_{2a-1}}\binom{1+2m_1+3m_2+\cdots+(2a+1)m_{2a}}{m_1,m_2,\dots,m_{2a},1+m_1+2m_2+\cdots+(2a)m_{2a}} \,
t_1^{m_1}\cdots t_{2a}^{m_{2a}}}
{1+2m_1+3m_2+\cdots+(2a+1)m_{2a}}.
\end{align*}
First, consider the case $a=1$ and refer back to Theorem ~\ref{conj1} (and its proof), to gather that if $t_1=-f$ and $t_2=f$ then, as expected, we arrive at
$$f^{n-1}\sum_{m=0}^{n-1} \frac{(-1)^{n-1-m}}{2n+1}\binom{n-1}m \binom{2n+1+m}{n+1+m}=f^{n-1}$$
as justified by the \emph{WZ-certificate} \cite{WZ} given by
$$R(n,m):=\frac{m(8mn + 10n^2 + 6m + 15n + 6)}{2(2n + 3)(n + 1)(n - m)}.$$
Second, we go back to study the above-posed calculations when $a>1$. To set the stage, substitute $t_1=t_2=\cdots=t_{2a-1}=f$ while leaving out $t_{2a}$ as an indeterminate. The outcome takes the form
\begin{align*}
& \sum_{\substack{m_1,\dots,m_{2a}\geq0 \\ m_1+\cdots+m_{2a}=n}} 
\frac{ (-1)^{m_1+m_3+\cdots+m_{2a-1}}\binom{1+2m_1+3m_2+\cdots+(2a+1)m_{2a}}{m_1,m_2,\dots,m_{2a},1+m_1+2m_2+\cdots+(2a)m_{2a}} \,
f^{n-m_{2a}}  t_{2a}^{m_{2a}}}
{1+2m_1+3m_2+\cdots+(2a+1)m_{2a}}.
\end{align*}
At this point, divide out the current polynomial (in $t_{2a}$) by the linear factor
$$-t_1+t_2-\cdots-t_{2a-3}+t_{2a-1}-t_{2a-1}+t_{2a}=t_{2a}-f$$
and then replace $t_{2a}$ by $f$. That leads to the sum
\begin{align*}
 f^{n-1}  \sum_{i=0}^{n-1} \sum_{m_{2a}=0}^i
\sum_{\substack{m_1,\dots,m_{2a}\geq0 \\ m_1+\cdots+m_{2a}=n}} 
\frac{ (-1)^{1+m_1+m_3+\cdots+m_{2a-1}}\binom{1+2m_1+3m_2+\cdots+(2a+1)m_{2a}}{m_1,m_2,\dots,m_{2a},1+m_1+2m_2+\cdots+(2a)m_{2a}} }
{1+2m_1+3m_2+\cdots+(2a+1)m_{2a}  }.
\end{align*}
Therefore, our main task that remains is to prove the identity declared by
$$\sum_{i=0}^{n-1} 
\sum_{\substack{m_1,\dots,m_{2a-1}\geq0 \\ m_1+\cdots+m_{2a}=n \\ 0\leq m_{2a}\leq i} }
\frac{ (-1)^{1+m_1+m_3+\cdots+m_{2a-1}}\binom{1+2m_1+3m_2+\cdots+(2a+1)m_{2a}}{m_1,m_2,\dots,m_{2a},1+m_1+2m_2+\cdots+(2a)m_{2a}} }
{1+2m_1+3m_2+\cdots+(2a+1)m_{2a}  }.= a^{n-1}.$$

\noindent
To put this more succinctly, introduce some notation. Let $\mathcal{P}$ denote the set of all integer partitions $\lambda$, written as $\lambda=(\lambda_1,\lambda_2,\dots)$ or $\lambda=1^{m_1}2^{m_2}\dots (2a)^{m_{2a}}$.  The size of $\lambda$ is denoted 
by $\vert\lambda\vert=\lambda_1+\lambda_2+\cdots=m_1+2m_2+\cdots+(2a)m_{2a}$ while we use $\ell(\lambda)=m_1+m_2+\cdots+m_{2a}$ for the length of the partition. So, the claim stands at
\begin{align} \label{conj3claim}
\sum_{\substack{\lambda\in \mathcal{P} \\ \ell(\lambda)=n \\ \lambda_1\leq 2a}}  (-1)^{1+\vert\lambda\vert} \cdot
\frac{ (n-m_{2a})\, \binom{n}{m_1,\dots,m_{2a}} \binom{\vert\lambda\vert +n+1}{\vert\lambda\vert +1}}
{\vert\lambda\vert +n+1}= a^{n-1}.
\end{align}

\smallskip
\noindent
We find it more convenient to split up this assertion into two separate claims
\begin{align} \label{conj3claim*}
(-1)^1\sum_{\substack{\lambda\in \mathcal{P} \\ \ell(\lambda)=n \\ \lambda_1\leq 2a}}  (-1)^{\vert\lambda\vert}  \,
  \binom{n}{m_1,\dots,m_{2a}} \binom{\vert\lambda\vert+n}{\vert\lambda\vert +1} & = 0, \\   \label{conj3claim**}
\sum_{\substack{\mu\in \mathcal{P} \\ \ell(\mu)=n-1 \\ \mu_1\leq 2a}}  (-1)^{\vert\mu\vert} \,
\binom{n-1}{m_1,\dots,m_{2a}} \binom{\vert\mu\vert +2a+n}{\vert\mu\vert +2a+1} & = a^{n-1}.
\end{align}
One arrives at \eqref{conj3claim*} due to $\frac{n\,\binom{\vert\lambda+n+1}{\vert\lambda\vert+1}}{\vert\lambda+n+1}
=\binom{\vert\lambda+n}{\vert\lambda\vert+1}$ and \eqref{conj3claim**} arises because of $m_{2a}\binom{n}{m_1,\dots,m_{2a}}\frac{(\vert\lambda+n)!}{(\vert\lambda+1)!n!}=\binom{n-1}{m_1,\dots,m_{2a}-1}\binom{\vert\lambda\vert+n}{\vert\lambda\vert+1}$
and then we reindex $m_{2a}'=m_{2a}-1$ to convert $\vert\lambda\vert=\vert\mu\vert+2a$ where $\ell(\mu)=n-1$. 

\smallskip
\noindent
In fact,  let's generalize \eqref{conj3claim*} and \eqref{conj3claim**} by introducing an extra parameter $x$.

\smallskip
\noindent
\bf Claim 1: \rm For positive integers $n, a$ and an indeterminate $x$, we have
$$\sum_{\substack{\lambda\in \mathcal{P} \\ \ell(\lambda)=n \\ \lambda_1\leq 2a}}  (-1)^{\vert\lambda\vert}  \,
  \binom{n}{m_1,\dots,m_{2a}} \binom{\vert\lambda\vert+n+x}{n-1}=0.$$

\noindent
\bf Claim 2: \rm For positive integers $n, a$ and an indeterminate $x$, we have
$$\sum_{\substack{\lambda\in \mathcal{P} \\ \ell(\lambda)=n-1 \\ \lambda_1\leq 2a}}  (-1)^{\vert\lambda\vert}  \,
  \binom{n-1}{m_1,\dots,m_{2a}} \binom{\vert\lambda\vert+n+x}{n-1}=a^{n-1}.$$

\noindent
\it Claim 2 implies Claim 1: \rm We apply the multinomial recurrence (assume $n=k_1+\cdots+k_r$)
\begin{align} \label{multirec}
\binom{n}{k_1,\dots,k_r} & = \binom{n-1}{k_1-1,\dots,k_r}+\cdots+\binom{n-1}{k_1,\dots,k_r-1}
\end{align}
followed by appropriate reindexing so that 
\begin{align*}
& \sum_{\substack{\lambda\in \mathcal{P} \\ \ell(\lambda)=n \\ \lambda_1\leq 2a}}  (-1)^{\vert\lambda\vert}  \,
 \binom{n}{m_1,\dots,m_{2a}} \binom{\vert\lambda\vert+n+x}{n-1}  \\
= & \sum_{i=1}^{2a} \sum_{\substack{\lambda\in \mathcal{P} \\ \ell(\lambda)=n \\ \lambda_1\leq 2a}}  (-1)^{\vert\lambda\vert}  \,
\binom{n-1}{m_1,\dots,m_i-1,\dots m_{2a}} \binom{\vert\lambda\vert+n+x}{n-1}  \\
= &  \sum_{i=1}^{2a} \sum_{\substack{\mu\in \mathcal{P} \\ \ell(\mu)=n-1 \\ \mu_1\leq 2a}}  (-1)^{\vert\mu\vert+i}  \,
\binom{n-1}{m_1,\dots,m_i',\dots m_{2a}} \binom{\vert\mu\vert+n+(x+i)}{n-1}  \\
= & \sum_{i=1}^{2a} (-1)^i \sum_{\substack{\mu\in \mathcal{P} \\ \ell(\mu)=n-1 \\ \mu_1\leq 2a}}  (-1)^{\vert\mu\vert}  \,
\binom{n-1}{m_1,\dots,m_i',\dots m_{2a}} \binom{\vert\mu\vert+n+(x+i)}{n-1}  \\
= &  a^{n-1} \sum_{i=1}^{2a} (-1)^i  \\
= & \, 0.
\end{align*}

\noindent
\it Proof of Claim 2: \rm Let's now utilize the multinomial theorem \eqref{multi} and  constant-term extraction. Start by noting the constant-term extraction
\begin{align*} 
\binom{\vert\lambda\vert +n+ x}{n-1}   &=\binom{m_1+2m_2+\cdots+(2a)m_{2a}+n+x}{n-1}  \\
& =\pmb{CT}_z \left[\frac{(1+z)^{m_1+2m_2+\cdots+(2a)m_{2a}+n+x}}{z^{n-1}}\right].
\end{align*}
Insert this into the left-hand side of Claim 2, take $\pmb{CT}_z$ outside the sum, factor out the inside and reapply the multinomial theorem in reverse \eqref{multi} to get 
\begin{align*}
&  \sum_{\substack{\lambda\in \mathcal{P} \\ \ell(\lambda)=n-1 \\ \lambda_1\leq 2a}}  (-1)^{\vert\lambda\vert}  \,
    \binom{n-1}{m_1,\dots,m_{2a}} \binom{\vert\lambda\vert+n+x}{n-1}  \\
= & \pmb{CT}_z \left[ \frac{(1+z)^{n+x}}{z^{n-1}} \sum \binom{n-1}{m_1,\dots,m_{2a}} (-1-z)^{m_1}(-1-z)^{2m_2}\cdots(-1-z)^{(2a)m_{2a}} \right] \\
= & \pmb{CT}_z \left[ \frac{(1+z)^{n+x}}{z^{n-1}} \left\{ -(1+z)^1+(1+z)^2-(1+z)^3+\cdots+(1+z)^{2a}\right\}^{n-1} \right].
\end{align*}
Next, follow through with the geometric series expansion to obtain
\begin{align*}
&  \sum_{\substack{\lambda\in \mathcal{P} \\ \ell(\lambda)=n-1 \\ \lambda_1\leq 2a}}  (-1)^{\vert\lambda\vert}  \,
    \binom{n-1}{m_1,\dots,m_{2a}} \binom{\vert\lambda\vert+n+x}{n-1}  \\
= & \pmb{CT}_z \left[ (-1)^{n-1} \frac{(1+z)^{2n+x-1}}{z^{n-1}}\left\{\frac{1-(1+z)^{2a}}{2+z}\right\}^{n-1}  \right]   \\
= & \pmb{CT}_z \left[  \frac{(1+z)^{2n+x-1}}{(2z)^{n-1}}\left\{\frac{z\sum_{k=1}^{2a}\binom{2a}k z^{k-1}}{1+\frac{z}2}\right\}^{n-1}  \right]  =a^{n-1}.
\end{align*}
The proof is indeed complete.
\end{proof}

\begin{remark} On \cite[p. 399]{WR}, it is stated that \emph{``With $k-2$ leading zeros, we conjecture that $G[0,\dots,m_k]$ is a two-parameter Fuss-Catalan number."} In light of the conjectures we already proved, the current claim is rather obvious (for further discussion on the topic the reader is directed to \cite{M}).
\end{remark}

\begin{remark} One can prove both Theorem ~\ref{conj1} and \ref{conj2} with the following observation. It suffice to explain this for Theorem ~\ref{conj1}. Since $C[m_1,m_2]$ are known from the Lagrange Inversion and because we have and explicit conjectured formula $G[m_1,m_2]$ due to \cite{WR}, all that is required is to verify the relation $G[m_1-1,m_2]+G[m_1,m_2-1]=C[m_1,m_2]$. This, however, is routine. Of course, the proofs in Section s1 and 2 do not assume knowing $C[m_1,m_2]$ and $G[m_1,m_2]$ \emph{a priori}: they are pure \emph{derivations} from scratch. 
\end{remark}

\begin{remark}
We offer (the proof is analogous to Theorem ~\ref{conj2} but omitted) the assertion that
\begin{align*} 
G[0,\dots,0,m_s,0,\dots,m_t]
&= \frac1n\sum_{j=0}^i  (-1)^{i-j}\binom{n}j \binom{(s+1)n+(t-s)j}{n-1},
\end{align*}
where we used $m_s=n-1-i, m_t=i$.
\end{remark}

\begin{remark} We also offer (the proof is analogous to Theorem ~\ref{conj3} but omitted) the assertion that for a generalized $2a$-variate case, we have
\begin{align*}
& \pmb{G}[-c_af,c_1f,-c_1f,c_2f,-c_2f,\cdots,c_{a-1}f,-c_{a-1}f,c_af]  \\
= & \sum_n (2ac_a-c_1-c_2-\dots - c_{a})^nf^n.
\end{align*}
\end{remark}

\end{document}